\newtheorem{theorem}{Theorem}[section]
\newtheorem{lemma}[theorem]{Lemma}
\newtheorem{proposition}[theorem]{Proposition}
\newtheorem{corollary}[theorem]{Corollary}
\newtheorem{fact}[theorem]{Fact}
\theoremstyle{definition}
\newtheorem{definition}[theorem]{Definition}
\theoremstyle{remark}
\newtheorem{remark}[theorem]{Remark}
\numberwithin{equation}{section}
\def\thm#1{Theorem~\ref{thm:#1}}
\def\fig#1{Figure~\ref{fig:#1}}
\def\prop#1{Proposition~\ref{prop:#1}}
\newcommand{\R}{\mathbb{R}}
\newcommand{\RD}{R_\Delta}
\newcommand{\setm}{\smallsetminus}
\newcommand{\lk}{{\operatorname{Lk}}}
\newcommand{\rib}{\operatorname{Rib}}
\begin{document}

\title[Folded ribbon unknots]{Ribbonlength of folded ribbon unknots in the plane.}


\author[Denne]{Elizabeth Denne}
\address{Elizabeth Denne, Department of Mathematics, Washington \& Lee University, Lexington VA}
\email{dennee@wlu.edu (corresponding author)}
\thanks{Denne was funded by 2014 and 2015 Summer Lenfest grants at Washington \& Lee University.}

\author[Kamp]{Mary Kamp}
\address{Mary Kamp, Department of Mathematics, Washington \& Lee University, Lexington~VA.}
\thanks{Kamp and Zhu were funded by Washington \& Lee's 2014 Summer Research Scholars Program.}

\author[Terry]{Rebecca Terry}
\address{Rebecca Terry, Department of Mathematics \& Statistics, Smith College, Northampton~MA.}
\curraddr{Department of Mathematics, The University of Utah, Salt Lake City~UT.}
\thanks{Terry was funded by the Center for Women in Mathematics at Smith College (funded by NSF grant DMS 0611020).}

\author[Zhu]{Xichen (Catherine) Zhu}
\address{Xichen Zhu, Department of Mathematics, Washington \& Lee University, Lexington~VA.}

\subjclass[2010]{Primary 57M25: Secondary 57Q35}
\keywords{Folded ribbon knots, ribbonlength, unknot, polygonal knots}

\date{}

\begin{abstract}
We study Kauffman's model of folded ribbon knots: knots made of a thin strip of paper folded flat in the plane. The ribbonlength is the length to width ratio of such a ribbon, and it turns out that the way the ribbon is folded influences the ribbonlength. We give an upper bound of   $n\cot(\pi/n)$ for the ribbonlength of $n$-stick unknots. We prove that the minimum ribbonlength for a 3-stick unknot with the same type of fold at each vertex is $3\sqrt{3}$, and such a minimizer is an equilateral triangle. We end the paper with a discussion of projection stick number and ribbonlength.
\end{abstract}

\maketitle

\section{Introduction}

We can create a ribbon knot in $\mathbb R^3$ by taking a long, rectangular piece of paper, tying a knot in it, and connecting the two ends. We then flatten the ribbon into the plane, origami style, with folds in the ribbon appearing only at the corners.  Such a {\em folded ribbon knot} was first modeled by L. Kauffman \cite{Kauf05}. (He called them flat knotted ribbons.) We have illustrated two different folded ribbon unknots in \fig{ribbon-example}.

\begin{center}
\begin{figure}
\includegraphics{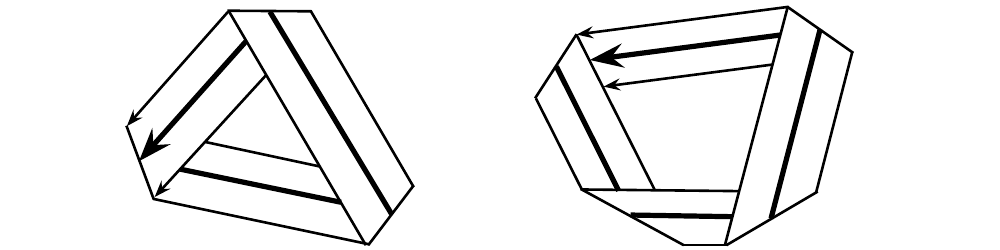}
\caption{Two different oriented folded ribbon unknots.}
\label{fig:ribbon-example}
\end{figure}
\end{center}

In 2008, B. Kennedy, T.W. Mattman, R. Raya and D. Tating~\cite{KMRT} used work of DeMaranville~\cite{DeM} to give upper bounds on the ribbonlength of various families of torus knots. (The ribbonlength is the length to width ratio of the folded ribbon knot.) They did not expect the bounds to be minimal, and in fact gave smaller versions of the $(5,2)$ and $(7,2)$ torus knots. They also gave some estimates, based on their computations, of the constants in the ribbonlength/crossing number conjecture by R. Kusner, namely that 
$$c_1\cdot Cr(K)\leq \rib(K)\leq c_2\cdot Cr(K),$$ where $c_2$, $c_2$ are unknown constants.

We pause here to note that in \cite{DSW-rib}, the first author, J.M.~Sullivan and N.~Wrinkle, have worked on the smooth analogue of folded ribbon knots. Here, the knot diagram is a smooth immersed curve in the plane.  This problem may also be thought of as a 2-dimensional analogue of the {\em ropelength problem}: that of finding the minimum amount of rope needed to tie a knot in a rope of unit diameter. (See for instance \cite{BS99,cks,gm,lsdr}.)

In this paper, we examine the folded ribbonlength of unknots. In Section 2, we review the definition of a folded ribbon knot. In Section 3, we review three different notions of ribbon equivalence, namely ribbon link, topological, and knot diagram ribbon equivalence. In Section 4, we compute the ribbonlength of $n$-stick unknots (for $n\geq 4)$ which are regular $n$-gons. This gives an upper bound on the ribbonlength of unknots with respect to link equivalence. We also give an upper bound for the ribbonlength for equilateral 3-stick unknots. This is $3\sqrt{3}$ when the each fold is of the same type, and $\sqrt{3}$ when one fold is different from the other two. In Section  5, we prove that for 3-stick unknots, where each fold is of the same type, half the ribbon width is less than or equal to the inradius of the triangular knot diagram. We then  use this idea to show that the minimum ribbonlength in this setting is indeed $3\sqrt{3}$ for an equilateral triangle. Finally, in Section 6, we discuss the $(5,2)$ torus knot example of \cite{KMRT} and show it is not ribbon link equivalent to the standard polygonal $(5,2)$ torus knot. 

\section{Modeling Folded Ribbon Knots}\label{section:model}
In \cite{Kauf05}, Kauffman considered a flat ribbon immersed in the plane to be a ``bundle of parallel rays reflected by mirror segments''. He defined a flat knotted ribbon to be a choice of weaving that overlies a flat ribbon immersion. These ideas have been formalized in \cite{Rib-Smith, DSW-rib},  and we give an overview of them in this section.

\subsection{Knot diagrams}
Recall that a {\em tame knot} is an embedding of $S^1$ in $\R^3$ (modulo reparametrizations) which is ambient isotopic to a polygonal knot. A {\em projection of a knot K} is the image of {\em K} under a projection from $\mathbb R^3$ to a plane, and a {\em knot diagram} adds gaps in a knot projection to show over and under crossing information.

\begin{definition} A {\em polygonal knot diagram} is the image of a piecewise linear (PL) immersion $K:S^1\rightarrow \R^2$, with consistent crossing information. We abuse notation and use $K$ to denote the map and its image in $\R^2$. We denote the finite number of {\it vertices} of the diagram by $v_1, ..., v_n$, and the {\it edges}~$e_i$ by $e_1=[v_1, v_2]$, \dots , $e_n=[v_n,v_1]$. If the diagram is oriented, then we assume that the labeling follows the orientation.
\end{definition}

We can formalize the notion of consistent crossing information by noting that any PL-immersion $K:S^1\rightarrow \R^2$ induces an {\em equivalence relation} $R$ on $S^1$. Namely,  $$R:=\{(x,y)\subset S^1\times S^1\, |\, K(x)=K(y)\},$$
the preimage of the diagonal $\Delta\subset \R^2\times \R^2$. We define $R_\Delta:=R\setm \Delta$ to be the set of {\em equivalent but distinct} pairs of points in $S^1\times S^1$. 

\begin{definition}\label{def:crossing}
Suppose $K:S^1\rightarrow \R^2$ is PL-immersion, then a choice
of \emph{crossing information} is a {\em continuous} function
$c\colon\RD\to\{\pm1\}$ satisfying the following two properties:
\begin{enumerate}
\item $c$ is equivariant in the sense that $c(y,x)=-c(x,y)$;
\item $c$ is transitive in the sense that $c(x,y)=+1=c(y,z)$ implies $c(x,z)=+1$.
\end{enumerate}
\end{definition}

The definitions of a polygonal knot diagram and consistent crossing information can be generalized to immersions of $S^1$ into $\R^2$, and also to maps between abstract topological spaces. This approach has been followed in \cite{DSW-rib}. We summarize the previous discussion as follows:

\begin{remark}A polygonal knot diagram $K$ in $\R^2$ requires that
\begin{enumerate}
\item  $K$ is a PL-immersion of $S^1$ in $\R^2$.
\item  For $R_\Delta$, the set of equivalent but distinct points of $K$, there is a choice of crossing information. 
\end{enumerate}
\end{remark}
Note that our definition of polygonal knot diagram did not require it to be {\em regular}. Also observe that while some polygonal knot diagrams arise from a projection of a polygonal knot to $\R^2$, many, however, do not. For example, a projection of a polygonal knot onto a plane which sends an edge to a single vertex will not result in a polygonal knot diagram. 
Conversely, it is not always the case that a polygonal knot can be built from a  polygonal knot diagram. The simplest example is the trefoil knot. It is well known (see for instance \cite{Adams, Calvo}) that the minimum stick index\footnote{Recall that the {\it stick index} of a knot K is defined to be the least number of line segments needed to construct a polygonal embedding of K.}  
for the trefoil knot is six.  

\subsection{Ribbons of width $w$}

Given a polygonal knot diagram $K$, how might we construct a folded ribbon knot?  We take a geometric rectangle and map it to the plane by a pathwise isometry that is an immersion everywhere but at the fold lines. 

\begin{definition} Given an oriented polygonal knot diagram $K$, we define the {\em fold angle} at vertex $v_i$ to be the angle $\theta_i$ (where $0\leq\theta_i\leq \pi$) between edges $e_{i-1}$ and $e_i$.  
\end{definition}
In \fig{ribbon-construct} (left), the fold angle is $\theta_i=\angle ECF$. We say that it is positive, since $e_i$ is to the left of $e_{i-1}$. If $e_i$ were to the right of $e_{i-1}$, then it would be negative.
\begin{figure}[htbp]
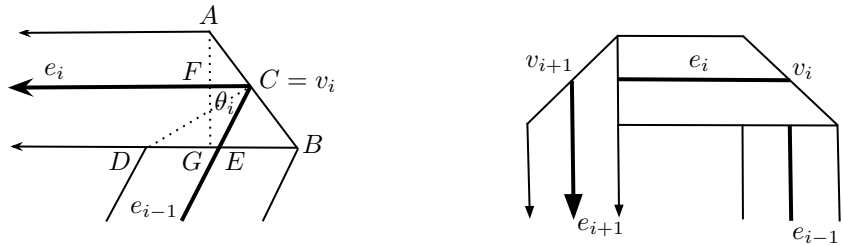

\begin{center}
\begin{overpic}{ribbon-construct}
\put(23,23){$A$}
\put(35,8){$B$}
\put(30,15.5){$C=v_i$}
\put(24.8,13){$\theta_i$}
\put(26,6){$E$}
\put(12.5,6){$D$}
\put(21,16.5){$F$}
\put(21,6){$G$}
\put(5,17){$e_i$}
\put(15,1){$e_{i-1}$}

\put(80,18){$e_i$}
\put(92,-2){$e_{i-1}$}
\put(67,-1){$e_{i+1}$}
\put(92,17){$v_i$}
\put(61,18){$v_{i+1}$}
\end{overpic}
\caption{On the left, a close-up view of a ribbon fold. On the right, the construction of the ribbon centered on edge $e_i$. }
\label{fig:ribbon-construct}
\end{center}
\end{figure}

\begin{definition}\label{def:FoldedRibbon} Given an oriented polygonal knot diagram $K$, an oriented folded ribbon knot of {\em width} $w$, denoted $K_w$, is constructed as follows:
 \begin{enumerate}

\item  First, construct the fold lines. At each vertex $v_i$ of $K$, find the fold angle~$\theta_i$.  If $\theta_i=\pi$, there is no fold line.
If $\theta_i<\pi$, place a fold line of length $w/\cos(\frac{\theta_i}{2})$ centered at $v_i$ perpendicular to the angle bisector of $\theta_i$. 
\item Second, add in the ribbon boundaries. For each edge $e_i$, join the ends of the fold lines at  $v_i$ and $v_{i+1}$. Each boundary line is parallel to, and distance $w/2$ from $K$.
\item The ribbon inherits an orientation from $K$.
\end{enumerate}
\end{definition}

This construction is illustrated in \fig{ribbon-construct}. On the left, the fold angle is $\theta_i=\angle ECF$, the angle bisector is $DC$, and the fold line is $AB$.  Using the geometry of the figure we see that $\angle GAB=\theta_i/2$ in right triangle $\triangle AGB$. Thus $|AB|=w/\cos(\frac{\theta_i}{2})$ guarantees the ribbon width $|AG|=w$. 

Observe that near a fold line, there is a choice of which ribbon lies above the other. Thus a polygonal knot diagram with $n$ vertices has $2^n$ possible folded ribbon knots depending on the choices made. 

\begin{figure}[htbp]
\begin{center}
\begin{overpic}{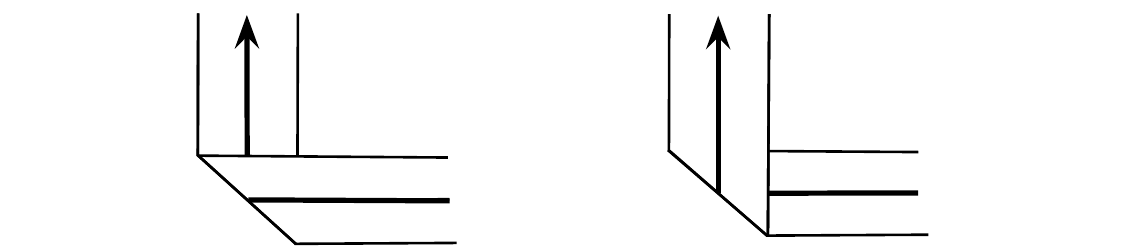}
\put(36, 6){$e_{i-1}$}
\put(23, 20){$e_{i}$}
\put(17, 4){$v_i$}

\put(76, 6.5){$e_{i-1}$}
\put(65, 20){$e_{i}$}
\put(60, 4){$v_i$}
\end{overpic}
\caption{A right underfold (left) and a right overfold (right).}
\label{fig:folding-info}
\end{center}
\end{figure}

\begin{definition}
Let $K_w$ be an oriented folded ribbon knot which is immersed (except for the fold lines). There is an {\em overfold} at vertex $v_i$ if the ribbon corresponding to segment $e_{i}$ is over the ribbon of segment $e_{i-1}$ (see \fig{folding-info} right). Similarly, there is an {\em underfold} if the ribbon corresponding to $e_{i}$ is under the ribbon of $e_{i-1}$. We also refer to {\em left} and {\em right} overfolds or underfolds depending on the sign of the fold angle. The choice of over or underfold at each vertex of $K_w$ is called the {\em folding information}, and is denoted by $F$.
\end{definition}
If we reverse the orientation of $K$, overfolds become underfolds and vice-versa.
An example showing three unknots with the same knot diagram but different folding information can found later in Figure~\ref{fig:4unknot}. 

Note that we can define the same kind of equivalence relation on ribbons $K_{w,F}$ as on knot diagrams $K$, and again define $R_\Delta$ to mean the set of distinct but equivalent points on the folded ribbon. We now combine all these ideas to give the following:

\begin{definition}\label{folded-ribbon} Given an oriented knot diagram $K$, we say the folded ribbon $K_{w,F}$ of width $w$ and folding information $F$ is {\em allowed} provided
\begin{enumerate}
\item The ribbon has no singularities (is immersed), except at the fold lines.
\item $K_w$ has a choice of crossing information, and moreover this agrees
\begin{enumerate} \item with the folding information given by $F$, and
\item with the crossing information of the knot diagram $K$.
\end{enumerate}
\end{enumerate}
\end{definition}

When a folded ribbon $K_{w,F}$ is allowed, the continuous crossing information on $\RD$ means that relative orders are fixed on path-connected subsets of $\RD$. This formalizes our intuition that a straight ribbon segment cannot ``pierce'' a fold, it either lies entirely above or below the fold, or lies between the two ribbons segments joined at the fold. 

 \begin{definition}
The \emph{width of a knot diagram $K$} is the widest ribbon allowed. That is, $w(K):=\sup\{w\,\vert\, \text{width $w$ allowed} \}$. Similarly, the {\em width of a knot diagram $K$ with folding information $F$} is  $w(K_F):=\sup\{w\,\vert\, \text{width $w$ allowed for $K_{w,F}$} \}$.
\end{definition}

 Observe that not all ribbon widths give an allowed folded ribbon knot. However, we can construct folded ribbon knots for ``small enough'' widths, as proved in \cite{Rib-Smith}. 
 \begin{proposition}[\cite{Rib-Smith}] Given any regular polygonal knot diagram $K$ and folding  information $F$, there is a constant $C>0$ such that an allowed folded ribbon knot $K_{w,F}$ exists for all $w<C$. 
\end{proposition}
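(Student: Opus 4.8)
The plan is to exploit the fact that the folded ribbon $K_{w,F}$ degenerates onto the diagram $K$ as $w\to 0$, so that for small $w$ its only self-overlaps occur exactly where the diagram crosses itself or folds. First I would fix a unit-speed PL parametrization $\gamma\colon S^1\to\R^2$ of $K$ and record the construction of \defn{FoldedRibbon} as a pathwise-isometric map $\Phi_w$ from the abstract band $S^1\times[-w/2,w/2]$ into $\R^2$: on the interior of each edge $\Phi_w(s,t)=\gamma(s)+tN(s)$ with $N(s)$ the (locally constant) unit normal, and near each vertex $v_i$ the two incident strips are joined along the fold line of length $w/\cos(\theta_i/2)$. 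The essential quantitative input is a uniform thickness bound: since the diagram is regular it has finitely many vertices, and because each fold angle satisfies $\theta_i<\pi$ the quantity $\min_i\cos(\theta_i/2)$ is a positive constant; hence there is a constant $C_0>0$, depending only on $K$, with $|\Phi_w(s,t)-\gamma(s)|\le C_0\,w$ for every $(s,t)$. In particular every ribbon point lies within $C_0w$ of $K$.

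Next I would localize all possible overlaps by a compactness argument. Regularity means $K$ has finitely many transverse double points $p_1,\dots,p_k$, none at a vertex, and no triple points. Consider $S^1\times S^1$ and delete small open neighborhoods of the diagonal, of the crossing pairs, and of the vertex-pairs; on the remaining compact set $|\gamma(s)-\gamma(s')|$ attains a positive minimum $\delta$. By the thickness bound, if $\Phi_w(s,t)=\Phi_w(s',t')$ then $|\gamma(s)-\gamma(s')|\le 2C_0w$, so once $2C_0w<\delta$ every coincidence of distinct ribbon points has base points that are close in $S^1\times S^1$: either along a single strand, or in the fold region of one vertex, or at one crossing. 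A single edge-strip is a translate of a segment and hence embedded, so away from the (finitely many) vertex and crossing regions the map $\Phi_w$ is injective; together with the fact that $\Phi_w$ is a local isometry off the fold lines, this gives that $K_{w,F}$ is immersed except at the folds, verifying condition (1) of Definition~\ref{folded-ribbon}.

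It then remains to produce the crossing information of \defn{crossing} and check it satisfies condition (2) of Definition~\ref{folded-ribbon}. On the overlap region over each vertex $v_i$ I would assign the over/under relation dictated by the folding information $F$, and on the overlap region over each crossing $p_j$ the relation dictated by the crossing information of $K$; shrinking $w$ further so the vertex regions and crossing regions are pairwise disjoint (possible since crossings avoid vertices and there are finitely many of each) makes these prescriptions non-overlapping, and since each region is itself path-connected the resulting function $c\colon\RD\to\{\pm1\}$ is continuous and, by construction, equivariant and transitive. The main obstacle I expect is exactly this last verification: showing that the locally defined over/under data glue to a \emph{globally} consistent $c$, i.e.\ that for small $w$ no straight ribbon segment pierces a fold and no point is covered by three sheets (ruled out by the absence of triple points and the localization above), so that the relative orders stay constant on each path-component of $\RD$ as required in the discussion following Definition~\ref{folded-ribbon}. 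Taking $C$ to be the minimum of the finitely many width thresholds produced along the way then yields an allowed $K_{w,F}$ for all $w<C$.
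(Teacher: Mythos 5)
Note first that the paper does not actually prove this proposition: it is imported verbatim from the reference \cite{Rib-Smith} (listed as ``in preparation''), so there is no in-paper proof to compare against. Judged on its own merits, your argument is sound and is essentially the natural proof one would expect: degenerate the ribbon onto $K$ as $w\to 0$, localize all coincidences by compactness, and prescribe the crossing function locally. The three pillars are all in place and all genuinely use the hypotheses: regularity of the diagram gives finitely many transverse double points, none at vertices and no triple points, which is exactly what makes your deleted-neighborhood minimum $\delta>0$ exist and makes transitivity of $c$ in \defn{crossing} vacuous (a triple coincidence in $\RD$ would force a triple point of the ribbon, which your localization excludes for small $w$); finiteness of the vertex set plus $\theta_i<\pi$ gives the uniform bound on fold-region diameters; and disjointness of the finitely many overlap regions makes the locally prescribed $c$ constant on path-components of $\RD$, hence continuous, with agreement with $F$ and with the crossing data of $K$ built in, verifying both clauses of Definition~\ref{folded-ribbon}.

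Two small refinements you could make. First, your thickness bound is obtained more cheaply than via $\min_i\cos(\theta_i/2)$: since $\Phi_w$ is a pathwise isometry and the centerline maps onto $K$, the segment from $(s,t)$ to $(s,0)$ in the band maps to a path of length $|t|$, so $|\Phi_w(s,t)-\gamma(s)|\le w/2$ outright; the constant $\min_i\cos(\theta_i/2)>0$ is instead what you need (and correctly invoke) to conclude that the fold \emph{overlap regions} have diameter $O(w)$, so that they eventually separate from each other and from the crossing regions. Second, when ruling out unexpected self-overlaps near the diagonal of $S^1\times S^1$, you should say explicitly that pairs spanning a single vertex are precisely the fold overlaps already accounted for by $F$ — the flap pairs limit onto the diagonal along the crease, so the diagonal neighborhood cannot simply be discarded there; your phrase ``in the fold region of one vertex'' covers this, but it is the one place where the compactness argument needs a separate, explicit case. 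Neither point is a gap in substance; the proof goes through.
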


\begin{remark} We observe that the topology of the folded ribbon knot depends on the number of edges of the knot diagram:  $K_{w,F}$ is a topological annulus when $K$ has an even number of edges, and is a topological M\"obius strip when $K$ has an odd number of edges.  \end{remark}

\section{Ribbon Equivalence}
\label{section:linkingnumber}

When given two folded ribbon knots, one might wonder when they are the same and different. To do this, we first begin by defining ribbon linking number.


\subsection{Ribbon Linking number}

The {\em linking number} is an invariant from knot theory used to determine the degree to which components of a link are joined together.  Given an oriented two component link $L=A\cup B$, recall that the linking number  $\lk(A,B)$ is defined to be one half the sum of $+1$ crossings and $-1$ crossings between $A$ and $B$. 

\begin{figure}[htbp]
\begin{center}
\begin{overpic}{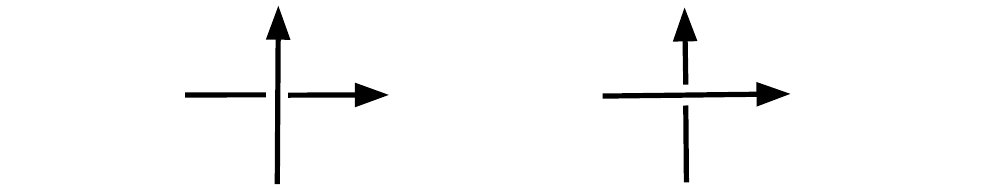}
\end{overpic}
\end{center}
\caption{The crossing on the left is labelled -1, the crossing on the right +1.}
\label{crossingvalue}
\end{figure}

Although we have described the construction of folded ribbon diagrams in $\mathbb R^2$, we can also consider the ribbons that these diagrams represent in $\mathbb R^3$. That is, as {\em framed knots}.

 \begin{definition} \label{ribbonlinkingnumber}
 Given an oriented folded ribbon knot $K_{w,F}$, we define the {\em ribbon linking number} to be the linking number between the knot diagram and one boundary component of the ribbon.  We denote this as $\lk(K_{w,F})$, or $\lk(K_w)$.
 \end{definition}
 
In \fig{ribbon-example}, the 3-stick unknot on the left  has
ribbon linking number $+1$, while the the 4-stick unknot on the right has ribbon linking number $-2$.

\subsection{Ribbon Equivalence}

We are now ready to define three different kinds of ribbon equivalence. We start with the most restrictive. If we consider two ribbons in space, we say they are equivalent if there is an ambient isotopy of $\R^3$ that takes one to the other. Viewed in the plane this gives the following:

\begin{definition}(Link equivalence)\label{def:re}
Two oriented folded ribbon knots are {\em (ribbon) link equivalent} if they have equivalent knot diagrams with the same ribbon linking number.
\end{definition}

For example, the left and center folded ribbon unknots in \fig{4unknot} are link equivalent, while the one on the right is not link equivalent to them. This example shows that there can be different looking folded ribbon knots with the same ribbon linking number.

\begin{figure}[htbp]
\begin{center}
\begin{overpic}{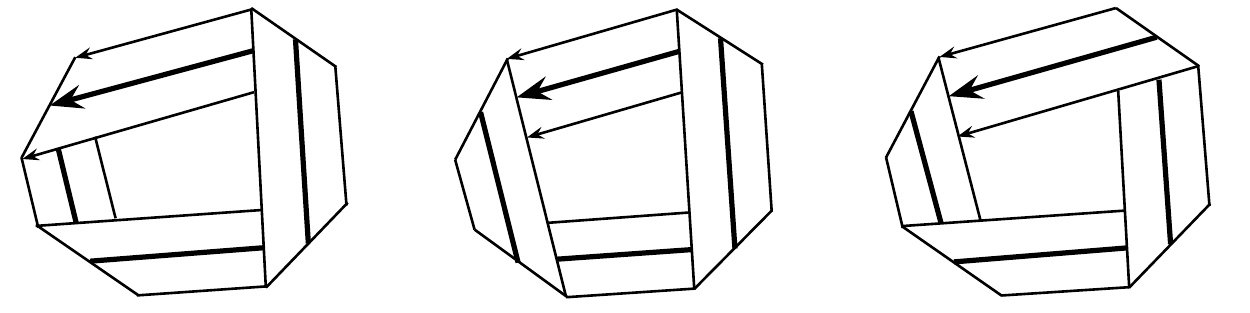}
\end{overpic}
 \caption{The left and center 4-stick folded ribbon unknots have ribbon linking number 0, while the one on the right has ribbon linking number $-4$. } 
\label{fig:4unknot}
\end{center}
\end{figure}

\begin{definition}(Topological equivalence)\label{def:te}  Two oriented  folded ribbon knots are {\em topologically (ribbon) equivalent} if they have equivalent knot diagrams and, when considered as ribbons in $\mathbb R^3$, both ribbons are topologically equivalent to a M\"obius strip or both ribbons are topologically equivalent to an annulus.  
\end{definition}

For example, all of the 4-stick folded ribbon unknots in \fig{4unknot} are topologically equivalent.

\begin{definition}(Knot diagram equivalence)\label{def:kde}  Two folded ribbon knots are {\em knot diagram equivalent} if they have equivalent knot diagrams.
\end{definition}

 For example, the 3-stick and 4-stick folded ribbon unknots in \fig{ribbon-example} are knot diagram equivalent, but are not topologically equivalent, nor link equivalent. 

\section{Ribbonlength}
\label{section:ribbonlength}

Given a particular folded ribbon knot, it is very natural to wonder what is the least length of paper needed to tie it.  More formally, we define a scale invariant quantity, called {\em ribbonlength}, as follows: 

\begin{definition}  The {\em (folded) ribbonlength}, $\rib(K_{w,F})$, of a folded ribbon knot $K_{w,F}$ is the quotient of the length of $K$ to the width $w$:
$$\rib(K_{w,F})=\frac{length(K)}{w}.$$
\end{definition}     

\begin{remark} The {\em ribbonlength problem} asks us to minimize the ribbonlength of a folded ribbon knot, while staying in a fixed topological knot type. That is, with respect to knot diagram equivalence of folded ribbon knots.  We can also ask to minimize the ribbonlength of folded ribbon knots with respect to topological and link equivalence.
\end{remark}

The ribbonlength problem remains open. Others \cite{Kauf05,KMRT} have found upper bounds on the ribbonlength of the trefoil knot, figure-8 knot and some infinite families of torus knots. There, the ribbonlength was found with respect to knot diagram equivalence. Some of these examples, like the trefoil knot, may well be ribbonlength minimizers. However, none of these examples were explicitly computed to bound ribbonlength with respect to topological or link equivalence.

\subsection{Unknots}
Any polygonal unknot diagram can be reduced to a 2-stick unknot, thus the minimum ribbonlength of any unknot (with respect to knot diagram equivalence) is~$0$. However, we do not expect this to be the case when considering topological or link equivalence.  We can easily get an upper bound on the ribbonlength of $n$-stick unknots by considering the case where the knot diagrams are regular $n$-gons. 

\begin{figure}[htbp]
\begin{center}
\begin{overpic}{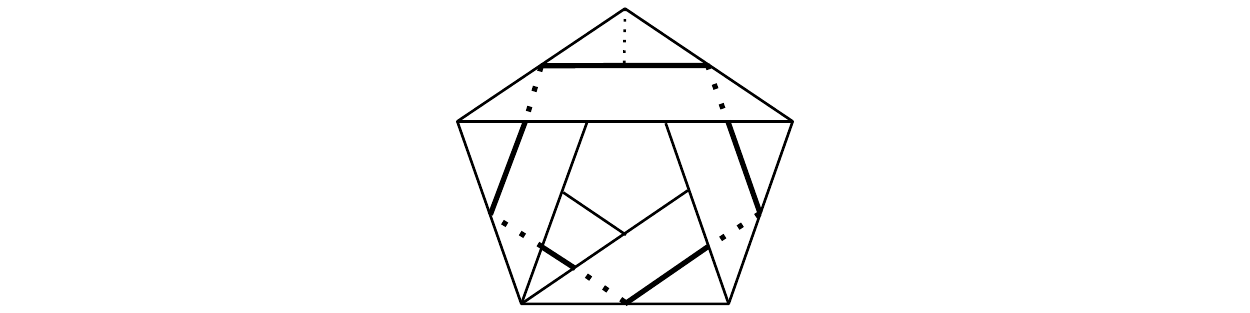}
\put(49,25){$A$}
\put(64,15){$B$}
\put(59,0){$C$}
\put(37,0){$D$}
\put(34,15){$E$}
\put(41,20){$F$}
\put(57,20){$G$}
\put(49,17){$H$}
\end{overpic}
\caption{Finding ribbonlength for a 5-stick folded ribbon unknot.}
\label{5unknot}
\end{center}
\end{figure}

\begin{proposition} \label{prop:min-unknot} The ribbonlength of an $n$-stick folded ribbon unknot (for $n\ge4$) is less than or equal to $n \cot(\frac{\pi}{n})$.
\end{proposition}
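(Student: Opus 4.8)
The plan is to realize the bound with the most symmetric $n$-stick unknot, the one whose knot diagram $K$ is a regular $n$-gon, and to compute its ribbonlength at the widest allowed width. Take $K$ to have side length $s$, so that $K$ has length $ns$ and apothem $a = \tfrac{s}{2}\cot(\tfrac{\pi}{n})$. Since $\rib(K_{w,F}) = ns/w$, proving the statement reduces to exhibiting an allowed folded ribbon $K_{w,F}$ whose width $w$ is as large as $s\tan(\tfrac{\pi}{n})$; then $\rib(K_{w,F}) = ns/\bigl(s\tan(\tfrac{\pi}{n})\bigr) = n\cot(\tfrac{\pi}{n})$, and because this is an allowed ribbon unknot the ribbonlength of $n$-stick unknots is at most $n\cot(\tfrac{\pi}{n})$.

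First I would record the local data at a vertex. Traversing the $n$-gon, the fold angle at each vertex is the interior angle $\theta_i = \pi - \tfrac{2\pi}{n}$, so by \defn{FoldedRibbon} each fold line has length $w/\cos(\tfrac{\theta_i}{2}) = w/\sin(\tfrac{\pi}{n})$ and is centred at the vertex perpendicular to the radial angle bisector. The geometric heart of the argument is then to track these fold lines as $w$ grows. Placing a fixed edge $e_i$ on the $x$-axis with its two vertices at $(\pm s/2, 0)$ and the centre of the polygon below it, a direct computation locates the two endpoints of each fold line on the ribbon boundary lines $y = \pm w/2$, and shows that the part of the outer boundary line $y = +w/2$ belonging to the ribbon over $e_i$ has length $s - \tfrac{2wa}{s}$, while the inner part only grows. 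This outer segment shrinks to a point exactly when $w = s^2/(2a) = s\tan(\tfrac{\pi}{n})$: at that width the outer endpoints of the fold lines at the two ends of $e_i$ collide at the outward-pushed midpoint of $e_i$.

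I would then argue that this is precisely the supremal allowed width. For $w < s\tan(\tfrac{\pi}{n})$ the fold lines at adjacent vertices remain disjoint on the outer side, so the construction of \defn{FoldedRibbon} produces a ribbon that is immersed away from the folds and carries crossing information consistent with a fixed folding $F$ (an unknot, since the diagram is a convex embedded polygon), i.e. it is allowed in the sense of Definition~\ref{folded-ribbon}; for larger $w$ the outer fold segments overcross, the creases can no longer be consistently realized, and the ribbon is not allowed. One must also check that no earlier obstruction intervenes: the inner boundary reaches the incentre only at $w = 2a = s\cot(\tfrac{\pi}{n})$, and since $n \ge 4$ forces $\tan(\tfrac{\pi}{n}) \le \cot(\tfrac{\pi}{n})$, this cannot happen before the outer collision. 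Hence it is the outer segment that binds, $w(K) = s\tan(\tfrac{\pi}{n})$, and $\rib = n\cot(\tfrac{\pi}{n})$.

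I expect the main obstacle to be this last step: verifying rigorously that $s\tan(\tfrac{\pi}{n})$ really is the largest allowed width, that is, that for every smaller width the ribbon genuinely meets both requirements of Definition~\ref{folded-ribbon}, and that nothing forces a strictly smaller width. This is exactly the interplay between the crease geometry and the notion of an allowed ribbon that separates the $n \ge 4$ case (outer fold segments binding) from the $n = 3$ case (the incentre binding, treated later via the inradius), and it is where the hypothesis $n \ge 4$ is used.
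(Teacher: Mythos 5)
Your proposal is correct and is essentially the paper's own argument: both realize the bound with a regular $n$-gon whose adjacent fold lines meet end-to-end on the outer boundary (so the diagram's vertices sit at midpoints of the regular $n$-gon formed by the folds), and your width relation $w = s\tan(\tfrac{\pi}{n})$ is exactly the paper's computation $|HG| = \tfrac{1}{2}\cot(\tfrac{\pi}{n})$ with $w=1$. Your additional verification that this width is supremal (and that the incentre constraint cannot bind first when $n\ge 4$) is not needed for the upper bound and is not claimed in the paper, but it is sound and correctly explains why $n=3$ is treated separately.
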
 

\begin{proof}  
Assume that the width $w=1$, and that the knot diagram is a regular $n$-gon. Shrink the diagram until the folds meet and form form a regular $n$-gon (for example the pentagon $ABCDE$ in Figure~\ref{5unknot}). For $n\geq 5$, there is a hole in the middle of the ribbon, since the ends of the folds meet on the unbounded component of $\R^2\setm K$. By construction, the vertices of the knot diagram are midpoints of the sides of the $n$-gon constructed from the folds.

Using Figure~\ref{5unknot} as a guide, drop a perpendicular $AH$ to side $FG$ of the unknot. Recall that the interior angle of a regular $n$-gon is $\pi(\frac{n-2}{n})$, hence $\angle AGH= {\pi}/{n}$ and $|HG|=\frac{1}{2}\cot(\pi/n)$. Using symmetry, we find the total length of the knot diagram to be $n\cot(\pi/n)$.  This is an upper bound on the minimum ribbonlength with respect to any type of ribbon equivalence. 
\end{proof}    

In \cite{Rib-Smith}, the first, third and other coauthors show how the minimum ribbonlength of the 5-stick folded unknot depends on the kind of ribbon equivalence.  There, we found that when minimizing ribbonlength of unknots with respect to topological equivalence, the 3-stick unknot case is important. We also gave more examples of ribbonlength computations for non-convex 4-stick unknots.


\subsection{3-stick unknot} For the 3-stick unknot, there are two types of folding information, up to rotation and flipping the entire ribbon over. Either all of the folds are of the same type, or one is different from the other two, as illustrated in Figure~\ref{fig:ooo-oou}.

\begin{center}
\begin{figure}[htpb]
 \begin{overpic}{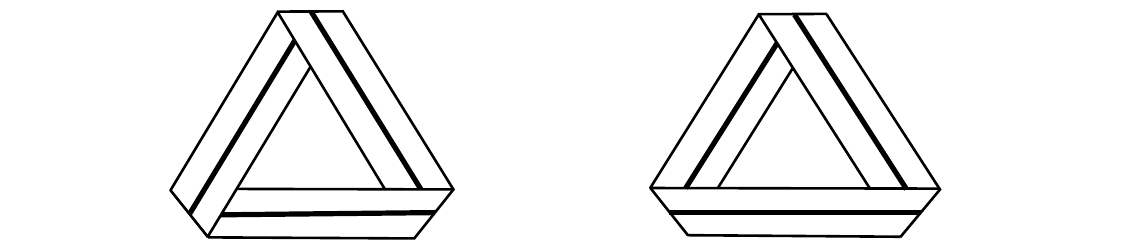}
 \put(26,23){$A$}
\put(12,2){$B$}
\put(40,2){$C$}
 \put(69,23){$A$}
\put(55,2){$B$}
\put(84,2){$C$}
\end{overpic}
\caption{On the left, all the folds are of the same type. On the right, one fold is different from the other two.}
\label{fig:ooo-oou}
\end{figure}
\end{center}

\begin{proposition} \label{prop:ooo-oou} The minimum ribbonlength of an 3-stick folded ribbon unknot $K_{w,F}$ is less than or equal to
\begin{enumerate}
\item $3\sqrt{3}$ \ when the folds are all the same type,
\item $\sqrt{3}$ \ when one fold is of different type to the other two.
\end{enumerate}
\end{proposition}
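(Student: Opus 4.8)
The plan is to prove each bound constructively: since the proposition only asserts \emph{upper} bounds on the minimum ribbonlength, for each of the two folding types it suffices to exhibit one allowed equilateral $3$-stick folded ribbon unknot and compute its ribbonlength. By scale invariance of $\rib$ I may fix the width $w=1$ throughout and am then free to choose the side length $s$ of the equilateral triangle; since $\rib(K_{w,F})=3s/w=3s$, minimizing ribbonlength amounts to making $s$ as small as the allowed condition (Definition~\ref{folded-ribbon}) permits for the given folding information $F$. The two folding types in \fig{ooo-oou} differ precisely in how small $s$ can be made, and this is where the factor of $3$ between the two bounds originates.

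For the all-same case, each fold angle is the interior angle $\theta=\pi/3$, so (by Definition~\ref{def:FoldedRibbon}) every fold line has length $1/\cos(\pi/6)=2/\sqrt3$. As the triangle is shrunk with all three folds of the same type, the three ribbon strips wind about the center like a pinwheel, and the inner boundary lines (parallel to the edges at distance $w/2$ on the inside) close up exactly when $w/2$ equals the inradius $r=s/(2\sqrt3)$. I would take the widest such ribbon, namely $w/2=r$, which forces $s=\sqrt3$; the three strips then tile the triangle and meet at the incenter, giving an allowed folding with all folds of the same type. The knot-diagram length is $3s=3\sqrt3$, so $\rib=3\sqrt3$.

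For the one-different case I would instead reuse the construction behind \prop{min-unknot} at $n=3$: shrink until the ends of adjacent fold lines meet, so that the fold endpoints are the vertices of an outer equilateral triangle whose edge-midpoints are the vertices of the knot diagram. Dropping a perpendicular of length $w/2=1/2$ from an outer vertex to the opposite knot-diagram edge, the base angle equals $\pi/3$, so each half-edge has length $\tfrac12\cot(\pi/3)$ and each edge has length $\cot(\pi/3)=1/\sqrt3$. The total length is $3\cdot\tfrac1{\sqrt3}=\sqrt3$, hence $\rib=\sqrt3$. Here the wider ribbon (ratio $w/s=\sqrt3$, three times that of the all-same case) is admissible precisely because the single reversed fold lets the offending strip pass uniformly over, or under, the other two near the center rather than being obstructed by them.

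The main obstacle is the verification that these two configurations are genuinely allowed in the sense of Definition~\ref{folded-ribbon}: one must check that away from the fold lines the ribbon is immersed and that a consistent, transitive choice of crossing information (Definition~\ref{def:crossing}) exists which agrees with the prescribed folding information $F$. Concretely, this requires analyzing the overlap pattern of the three strips near the center of the triangle and confirming that no strip is forced to ``pierce'' a fold. For the all-same case the delicate point is that $w/2=r$ is exactly the threshold at which the pinwheel folding remains consistent (it fails for $w/2>r$), so $3\sqrt3$ is the infimum approached as the width increases to this value; for the one-different case the point is to confirm that reversing a single fold removes the obstruction and admits the tighter, three-times-wider strip. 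These layering checks, rather than the elementary length computations, are where the real work lies.
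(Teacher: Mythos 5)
Your proposal is correct and follows essentially the same route as the paper: for the all-same case you shrink the equilateral triangle until the ribbon meets at the incenter (so $w/2$ equals the inradius $s/(2\sqrt3)$, giving $s=\sqrt3$ and $\rib=3\sqrt3$), and for the one-different case you shrink until the fold lines close up into an outer equilateral triangle --- exactly the $n=3$ analogue of the construction in \prop{min-unknot} that the paper itself invokes --- giving $s=1/\sqrt3$ and $\rib=\sqrt3$. Your closing remarks on verifying allowedness and on $w/2=r_{in}$ being the exact threshold anticipate the paper's Theorem~\ref{thm:local-structure}, but like the paper's own proof of this proposition, the upper bounds need only the exhibited configurations.
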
 

\begin{proof}  Assume that the width is $w=1$, and that the knot diagram is an equilateral triangle. Also assume that the folds are all of the same type, as illustrated on the left in Figure~\ref{fig:ooo-oou}. Shrink $\triangle ABC$ until the center of the ribbons' edges meet at the incenter of the triangle. \fig{equilateral} shows the incenter $I$ of $\triangle ABC$, with $AE$ and $IC$ angle bisectors. By construction, $\angle IAD=\pi/6$, and $w/2=|ID|=1/2$. 
Thus in $\triangle IAD$, we have $ \vert AD \vert=\vert ID\vert \cot{(\frac{\pi}{6})} = \frac{\sqrt{3}}{2}$. Using symmetry, we deduce that $\vert AC \vert = 2\vert AD \vert = \sqrt{3}$, and therefore the ribbonlength is $3\sqrt{3}$.

\begin{figure}[htbp]
\begin{center}
\begin{overpic}{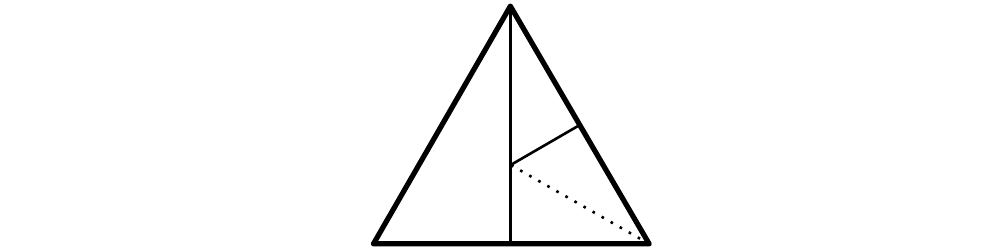}
\put(50,26){$A$}
\put(33,0){$B$}
\put(66,0){$C$}
\put(48,8){$I$}
\put(59,12){$D$}
\put(50,-3){$E$}
\end{overpic}
\caption{Equilateral triangle $\triangle ABC$, with incenter $I$.}
\label{fig:equilateral}
\end{center}
\end{figure}

Now assume that one fold is of different type to the others, as illustrated on the right in Figure~\ref{fig:ooo-oou}. Here, we can keep shrinking $\triangle ABC$ until the edge of the ribbon along side $BC$ meets the fold at $A$. At this point, the folds form an equilateral triangle (as in the proof of \prop{min-unknot}, the ends of the folds touch in the unbounded component of $\R^2\setm K$).  Thus in \fig{equilateral}, $w/2=|AE|=1/2$, and hence $\frac{\vert AE\vert}{\vert AC\vert}=\cos (\frac{\pi}{6}) = \frac{\sqrt{3}}{2}.$
Therefore $|AC| = 1/\sqrt{3}$, and ribbonlength is $3/\sqrt{3} = \sqrt{3}$. \end{proof}

In conclusion, the ribbonlength of an unknot up to knot diagram equivalence is 0. The ribbonlength of an unknot is less than or equal to $\sqrt{3}$ when the ribbon is equivalent to a M\"obius strip, and is 0 when the ribbon is equivalent to an annulus. When ribbon linking number is taken into consideration the situation is more complex. For example, for 3-stick unknots with ribbon linking number $\pm 3$ the ribbonlength is less than or equal to $3\sqrt{3}$, and is less than or equal to $\sqrt{3}$ for 3-stick unknots with ribbon linking number $\pm 1$. In general, the ribbonlength of an $n$-stick unknot is less than or equal to $n\cot(\pi/n)$ with respect to ribbon link equivalence.

\section{Local structure of folded ribbon knots}\label{maxwidth}
In this section, we show the relationship between the ribbon width and inradius of a 3-stick unknot diagram, then generalize it to the local structure of more general folded ribbon knots. We then show that an equilateral 3-stick knot diagram has minimum ribbonlength when all folds are of the same type.
\subsection{Width and inradius}
The key step in proving \prop{ooo-oou} was realizing the ribbon touches in the interior of the triangle at the incenter. Recall that the {\em inradius} is the distance from the incenter to any edge of the triangle, and is the radius of the largest circle inscribed in the triangle.

\begin{theorem} \label{thm:local-structure}
Given a nondegenerate 3-stick folded ribbon unknot $K_{w,F}$, where the folds are all of the same type, then $\displaystyle \frac{w}{2} \leqslant r_{in}$, where $r_{in}$ is the inradius of the knot diagram.
\end{theorem}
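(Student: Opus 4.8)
The plan is to establish the inequality by contradiction, leveraging the structural constraints that an \emph{allowed} folded ribbon (\defn{folded-ribbon}) must satisfy when all three folds are of the same type. I would suppose, for contradiction, that $w/2 > r_{in}$, and aim to show that the ribbon cannot be immersed (violating condition (1) of an allowed ribbon) or cannot carry consistent crossing information agreeing with $F$. The geometric heart of the matter is that when all folds are of the same type (all overfolds or all underfolds), the three ribbon segments spiral consistently around the triangle, and the fold lines force each boundary to pass within distance $w/2$ of the incenter.

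First I would set up the geometry explicitly. Let $\triangle ABC$ be the knot diagram with incenter $I$, and recall from \defn{def:FoldedRibbon} that along each edge $e_i$ the two ribbon boundary lines are parallel to $e_i$ at distance $w/2$. The incircle of radius $r_{in}$ is the unique circle tangent to all three edges from inside. The key observation is that the inner boundary line of the ribbon along each edge (the boundary on the side of $I$) lies at distance $w/2$ from that edge, hence at signed distance $r_{in} - w/2$ from $I$ measured toward the edge. When $w/2 \le r_{in}$ these three inner boundaries still bound a (possibly small) triangular region containing $I$; when $w/2 > r_{in}$ they cross over, so the inner boundaries sweep past the incenter and mutually intersect in the interior.

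The main steps would then be: (i) show that all-same-type folding forces a definite cyclic over/under pattern on the three ribbon segments — with three segments this pattern is exactly the one realizing a nontrivial ribbon linking number (the $\pm 3$ case discussed after \prop{ooo-oou}); (ii) argue that if $w/2 > r_{in}$, the three inner boundaries overlap near $I$ in a way that the consistent cyclic order cannot be maintained, i.e. the crossing information $c\colon\RD\to\{\pm1\}$ forced by $F$ fails transitivity (property (2) of \defn{def:crossing}) at the central overlap, since each segment would need to be simultaneously above and below its neighbors. This contradicts the ribbon being allowed. The boundary case $w/2 = r_{in}$ is exactly the configuration realized in the proof of \prop{ooo-oou}, where the ribbons' edges meet precisely at $I$, confirming the bound is sharp.

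I expect the main obstacle to be step (ii): making rigorous the claim that the cyclic ``all-same-type'' folding is incompatible with the three inner boundaries overlapping at the center. The delicate point is to rule out an escape where the ribbon avoids self-intersection by some clever weaving that still respects $F$ — here I would use the remark following \defn{folded-ribbon}, that continuous crossing information fixes relative orders on path-connected subsets of $\RD$, so a straight segment cannot pierce a fold but must lie wholly above, wholly below, or between the two segments at that fold. Tracing this constraint cyclically around the three folds of the same type yields the impossible ordering once the overlap region forms, which is the desired contradiction.
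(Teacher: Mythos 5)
Your proposal follows essentially the same route as the paper's proof: assume $w/2 > r_{in}$, observe that the incenter then lies in all three ribbon strips, and derive a contradiction because the cyclic over/under pattern forced by three same-type folds is incompatible with continuous, transitive crossing information at the common overlap point. The paper executes your step (ii) concretely by tracing the relations (at $B$: $BC$ over $BA$; at $C$: $CA$ over $CB$; hence at $I$: $CA$ over $AB$, contradicting $AB$ over $AC$ at $A$), which is exactly the argument you sketch.
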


\begin{figure}[htbp]
\begin{center}
\begin{overpic}{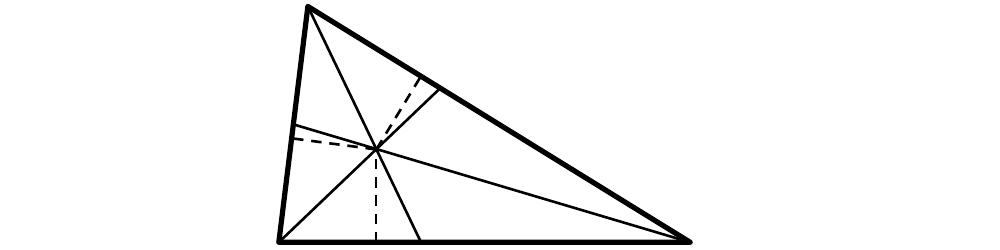}
\put(30,25){$A$}
\put(24,0){$B$}
\put(70,0){$C$}
\put(36,-2.5){$G$}
\put(25,9){$H$}
\put(41,19){$J$}
\put(26,13){$E$}
\put(44.5,16.5){$F$}
\put(42,-2.5){$D$}
\put(40,10){$I$}
\end{overpic}
\end{center}
\caption{The knot diagram is represented by triangle $\triangle ABC$ with incenter $I$.}
\label{fig:crossingproof}
\end{figure}

\begin{proof}
Suppose the knot diagram is represented by $\triangle ABC$ as shown in \fig{crossingproof}. There, $AD$, $BF$ and $CE$ are the angle bisectors intersecting at point $I$. Line segments $IJ$, $IG$, respectively $IH$ are perpendiculars from $I$ to sides $AC$, $BC$ and $AB$ respectively, and $\vert IG\vert  =\vert IH\vert  =\vert IJ\vert =r_{in}$, the inradius of $\triangle ABC$.

 Assume by way of contradiction that $\frac{w}{2} > r_{in}$. This means $I$ must be contained in all three ribbons adjacent to each side. Without loss of generality, assume that each fold is an overfold as we traverse $\triangle ABC$ in a counterclockwise direction. At $B$, the ribbon near  $BC$ is over the ribbon near $BA$ and at $C$, the ribbon near $CA$ is over the ribbon near $CB$. Since crossing information is continuous, at point $I$ the ribbon near $CA$ is over the ribbon near $AB$. However, at $A$, the ribbon near $AB$ is over the ribbon near $AC$, a contradiction. Therefore we have $\frac{w}{2} \leqslant r_{in}.$
\end{proof}

This result can easily be generalized to other regular $n$-gons. There, the inradius is the radius of the largest inscribed circle in the $n$-gon. Unfortunately, we do not get better results for the ribbonlength than found in \prop{min-unknot}. We previously observed that there is a hole in the center of the ribbon for $n\geq 5$, and so $w/2$ is strictly smaller than the inradius. 

However, we may generalize \thm{local-structure} to apply to the local structure of more general folded ribbon knots. We now zoom in to a portion of a knot diagram consisting only of a nondegenerate triangle. Here the vertices of the triangle are either double points or vertices of the knot diagram. We first examine the possible choices for crossing information at each vertex up to symmetry. 

\begin{enumerate}
\item The two cases for triangle with folds at all three vertices are shown in \fig{ooo-oou}.

\item The two cases where the vertices of the triangle are at three crossings are shown in \fig{3-crossings}.  

\begin{figure}[htbp]
\begin{center}
\begin{overpic}{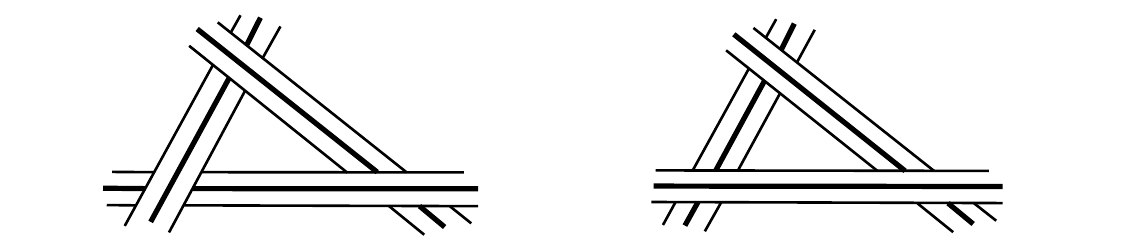}
\end{overpic}
\end{center}
\caption{Different crossing information where there are three crossings.}

\label{fig:3-crossings}
\end{figure}

\item The three cases where the vertices of the triangle are at one fold and two crossings are shown in \fig{1-fold-2-cr}. 

\begin{figure}[htbp]
\begin{center}
\begin{overpic}{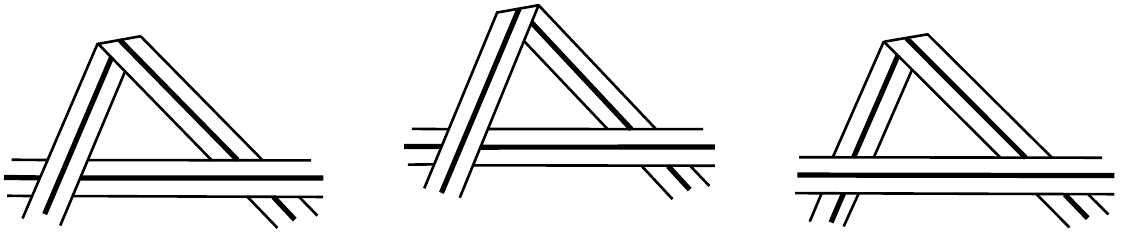}
\end{overpic}
\end{center}
\caption{Different crossing information where there are two crossings and one fold.}
\label{fig:1-fold-2-cr}
\end{figure}

\item The three cases where the vertices of the triangle are at two folds and one crossing are shown in \fig{2-folds-1-cr}. 

\begin{figure}[htbp]
\begin{center}
\begin{overpic}{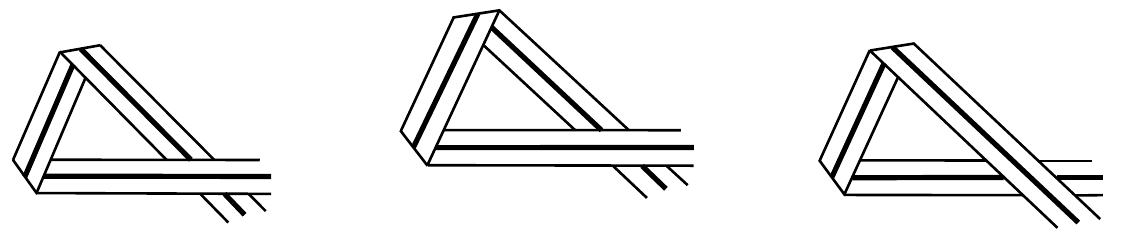}
\end{overpic}
\end{center}
\caption{Different crossing information where there are two folds and one crossing.}
\label{fig:2-folds-1-cr}
\end{figure}

\end{enumerate}

In the leftmost example in each of Figures~\ref{fig:3-crossings},~\ref{fig:1-fold-2-cr}, and~\ref{fig:2-folds-1-cr}, the folding and/or crossing information is the same. Namely, as we traverse the triangle in a clockwise direction the crossing information is over-over-over. In each case, the arguments from \thm{local-structure} carry over immediately. We thus deduce the following corollary.

\begin{corollary} Let $K_{w,F}$ be a folded ribbon knot, and suppose part of the knot diagram consists of a triangle with inradius $r_{in}$. If the folding and/or crossing information is the same as the triangle is traversed (for example, over-over-over), then $\displaystyle \frac{w}{2} \leqslant r_{in}$. \qed
\end{corollary}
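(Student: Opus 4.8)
The plan is to run the \emph{same} contradiction argument as in \thm{local-structure}, observing that its proof never used the fact that the three corners of the triangle were genuine fold vertices. Looking back, that proof relied only on two ingredients: (i) the purely metric fact that the incenter $I$ sits at perpendicular distance exactly $r_{in}$ from each of the three sides, so that $w/2 > r_{in}$ would force $I$ into all three ribbon strips at once; and (ii) the continuity and transitivity of crossing information, which propagates the pairwise over/under relation read off at each corner into the common overlap region. Neither ingredient cares whether a corner is a fold or a crossing, so the argument should transfer essentially verbatim.

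First I would pin down the geometry. Let the triangular piece of the diagram have sides carrying ribbons $r_1, r_2, r_3$, and let $I$ be the incenter with inradius $r_{in}$. The foot of the perpendicular from $I$ to each side is the point where the incircle is tangent to that side, and for a nondegenerate triangle this tangency point lies strictly between the two corners. Hence, assuming for contradiction that $w/2 > r_{in}$, the point $I$ lies in the interior of the width-$w$ strip about each side; that is, $I$ is simultaneously covered by $r_1$, $r_2$, and $r_3$.

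Next I would transport the over/under data to $I$. In the ``over-over-over'' configuration (the leftmost case of each of \fig{3-crossings}, \fig{1-fold-2-cr}, and \fig{2-folds-1-cr}), traversing the triangle records, at successive corners, that $r_1$ lies over $r_2$, that $r_2$ lies over $r_3$, and that $r_3$ lies over $r_1$. Because an allowed ribbon has continuous crossing information, the relative order of any two of these strips is constant on the path-connected overlap region joining the corner where it is declared to the triple-overlap point $I$; thus all three relations hold at $I$. Transitivity of the crossing function then gives $r_1$ over $r_3$ (from $r_1$ over $r_2$ over $r_3$), while the third relation gives $r_3$ over $r_1$, a contradiction. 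Therefore $w/2 \le r_{in}$.

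The step I expect to be the main (if modest) obstacle is the claim that each declared over/under relation really does survive to $I$: one must check that the overlap of any two of the strips, restricted to the triangle, is path-connected, so that continuity of crossing information can carry the relation from near a corner to $I$. At a fold the two relevant ribbon pieces are joined at the fold line, whereas at a crossing they are independent strands, but in both cases the relation is exactly the pairwise order that the continuity argument of \thm{local-structure} controls. Verifying this connectivity, and noting that the mixed fold/crossing cases introduce nothing new, is all that separates the corollary from a direct appeal to \thm{local-structure}.
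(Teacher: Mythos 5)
Your proposal is correct and matches the paper's proof, which simply observes that in the over-over-over configurations of each case the contradiction argument of \thm{local-structure} (incenter covered by all three strips, continuity propagating the pairwise relations to $I$, transitivity yielding a contradiction) carries over immediately. Your extra care about path-connectedness of the pairwise overlaps is a reasonable elaboration of the same argument rather than a different route.
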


\begin{remark} The astute reader will realize that the arguments in \thm{local-structure} should carry over to general polygonal polygonal regions as well. We need the notion of medial axis to make these ideas precise. This approach has been followed by the first author in \cite{DSW-med,DSW-rib} for regions of a knot diagram bounded by curves of finite total curvature (of which polygons are a subset).
\end{remark}
\subsection{Minimizing ribbonlength for the 3-stick unknot}

We end this section by showing an equilateral 3-stick unknot minimizes ribbonlength. The proof of this result relies on two well-known geometric properties of the inradius of a triangle (see for instance \cite{PW,Wil}). We outline these ideas below.

\begin{fact} \label{lem:areainradius}
For any triangle, the area  equals one-half the product of its inradius with its perimeter. 
\end{fact}

\begin{figure}[htbp]
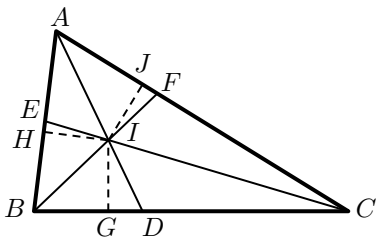

\begin{center}
\begin{overpic}{triangle.pdf}
\put(30,25){$A$}
\put(24,0){$B$}
\put(70,0){$C$}
\put(36,-2.5){$G$}
\put(25,9){$H$}
\put(41,19){$J$}
\put(26,13){$E$}
\put(44.5,16.5){$F$}
\put(42,-2.5){$D$}
\put(40,10){$I$}
\end{overpic}
\end{center}
\caption{Triangle $\triangle ABC$ with incenter $I$.}
\label{fig:inradius}
\end{figure}

In Figure~\ref{fig:inradius}, $AD$, $BF$, $CE$ are angle bisectors, and we may view triangle $\triangle ABC =\triangle AIB +\triangle BIC+\triangle CIA$. Since the inradius $r_{in}$ is the height of each of the smaller triangles,  the area of triangle $\triangle ABC$ may be written as
$$\mathcal {A}_{\triangle ABC} =\frac{1}{2} (\vert  AB \vert + \vert BC  \vert +  \vert AC  \vert) \cdot  r_{in}. $$

Now for a fixed perimeter triangle, the inradius is maximized when the area of the triangle is largest. It is straightforward to show that the area of a fixed perimeter triangle is maximized when it is an equilateral triangle (for example using Heron's formula and the arithmetic mean -- geometric mean inequality). Thus we have the following:
\begin{fact}\label{fact:inradius}
Amongst all the triangles with the same perimeter, the equilateral triangle has the maximum inradius. 
\end{fact}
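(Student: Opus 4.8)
The plan is to reduce the claim about the inradius to the classical statement that, among triangles of fixed perimeter, the equilateral one has the greatest area, and then to settle that statement with Heron's formula and the arithmetic mean--geometric mean inequality, exactly as foreshadowed in the surrounding text.

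First I would invoke Fact~\ref{lem:areainradius}, which gives $\mathcal{A} = \tfrac12\cdot(\text{perimeter})\cdot r_{in}$, equivalently $r_{in} = \mathcal{A}/s$, where $s$ denotes the semiperimeter. Since all triangles under consideration share the same perimeter, $s$ is a fixed constant, so maximizing $r_{in}$ is equivalent to maximizing the area $\mathcal{A}$. Next I would express the area by Heron's formula, $\mathcal{A} = \sqrt{s(s-a)(s-b)(s-c)}$, where $a,b,c$ are the side lengths. Setting $x=s-a$, $y=s-b$, $z=s-c$, the triangle inequalities guarantee $x,y,z>0$, and a short computation gives the fixed constraint $x+y+z = 3s-(a+b+c) = s$. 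Maximizing $\mathcal{A}$ is therefore the same as maximizing the product $xyz$ subject to the fixed sum $x+y+z=s$, and the AM--GM inequality yields $xyz \le (s/3)^3$ with equality precisely when $x=y=z=s/3$, i.e. when $a=b=c$. Hence the area, and with it the inradius, is maximized exactly by the equilateral triangle.

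There is no serious obstacle here; the argument is elementary and the only points meriting a word of care are bookkeeping. One should confirm that the equality case $x=y=z$ is attained by an admissible nondegenerate triangle — it is, namely the equilateral triangle with each side equal to $2s/3$ — and that the degenerate configurations in which one of $x,y,z$ tends to $0$ force $\mathcal{A}\to 0$, so the AM--GM optimum is a genuine global maximum rather than a boundary artifact. With this in hand the chain $r_{in} = \mathcal{A}/s$, together with the maximality of $\mathcal{A}$ at the equilateral triangle, completes the proof.
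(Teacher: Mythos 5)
Your proposal is correct and follows essentially the same route as the paper: the paper likewise reduces maximizing $r_{in}$ at fixed perimeter to maximizing area via Fact~\ref{lem:areainradius}, and explicitly cites Heron's formula together with the AM--GM inequality for the area step. You have simply filled in the substitution $x=s-a$, $y=s-b$, $z=s-c$ and the equality/degeneracy bookkeeping that the paper leaves to the reader as ``straightforward.''
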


Putting all these facts together gives our main result.
 
\begin{theorem}
The minimum ribbonlength for the 3-stick folded unknot is $3\sqrt{3}$ where all folds have the same folding information. This occurs when the knot diagram is an equilateral triangle.
\end{theorem}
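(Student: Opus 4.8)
The plan is to combine the width--inradius inequality of \thm{local-structure} with the two geometric Facts to reduce the whole problem to a classical isoperimetric statement about triangles. Write $P$ for the length (perimeter) of the triangular knot diagram and $w$ for the ribbon width, so that $\rib(K_{w,F}) = P/w$. Since all folds are of the same type, \thm{local-structure} applies and gives $w/2 \leq r_{in}$, hence $w \leq 2r_{in}$, and therefore
$$\rib(K_{w,F}) = \frac{P}{w} \geq \frac{P}{2r_{in}}.$$
First I would record that this lower bound holds for \emph{every} nondegenerate triangular knot diagram with same-type folds, not just the equilateral one.

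Next I would minimize the right-hand side over all triangles. The key observation is that $P/(2r_{in})$ is scale invariant (both numerator and denominator are lengths), matching the scale invariance of ribbonlength, so I may normalize the perimeter, say $P = 1$. Minimizing $P/(2r_{in})$ then amounts to maximizing $r_{in}$ among all triangles of fixed perimeter, which is exactly Fact~\ref{fact:inradius}: the unique maximizer is the equilateral triangle. (Equivalently, using Fact~\ref{lem:areainradius} to substitute $r_{in} = 2\mathcal{A}/P$, the bound becomes $\rib(K_{w,F}) \geq P^2/(4\mathcal{A})$, and minimizing this is the classical fact that the equilateral triangle maximizes area for fixed perimeter.) I would then evaluate the bound at the equilateral triangle: for side length $a$ one has $P = 3a$ and $r_{in} = a/(2\sqrt3)$, so $P/(2r_{in}) = 3\sqrt3$. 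This establishes $\rib(K_{w,F}) \geq 3\sqrt3$ for all such 3-stick folded unknots.

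Finally I would verify attainment. For the equilateral triangle with all folds of the same type, the ribbons meet exactly at the incenter, so $w/2 = r_{in}$ and every inequality in the chain above becomes an equality; indeed \prop{ooo-oou} already constructs this configuration and computes its ribbonlength to be exactly $3\sqrt3$. Combining the lower bound with this construction shows the minimum is $3\sqrt3$, achieved when the knot diagram is an equilateral triangle.

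The main point to handle carefully is the logical structure rather than any hard computation, since the heavy lifting is done by \thm{local-structure} and the two Facts. Specifically, \thm{local-structure} supplies only the inequality $w \leq 2r_{in}$, so by itself it yields a lower bound on ribbonlength for each fixed triangle; I must separately invoke the explicit equilateral construction of \prop{ooo-oou} to confirm the value $3\sqrt3$ is attained, so that it is genuinely the minimum and not merely a lower bound. I should also make sure the optimization over triangles is legitimately reduced to a fixed-perimeter problem by scale invariance, and that the identification of the equilateral triangle as the \emph{unique} minimizer follows from the equality case of Fact~\ref{fact:inradius}.
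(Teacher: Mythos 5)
Your proof is correct and takes essentially the same route as the paper's: combine \thm{local-structure} (giving $w \leq 2r_{in}$) with Fact~\ref{fact:inradius} to identify the equilateral triangle as the optimal diagram, then invoke the construction in \prop{ooo-oou} to show the value $3\sqrt{3}$ is attained. If anything, your write-up is slightly more careful than the paper's, which phrases the width bound as if the maximum were automatically achieved, whereas you correctly separate the lower-bound inequality from the attainment step.
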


\begin{proof}
By \thm{local-structure}, we know that the maximum possible width of the  3-stick folded ribbon unknot  where all folds have the same folding information is twice the inradius of the triangle. By Fact~\ref{fact:inradius}, we know that amongst all triangles with the same perimeter, the equilateral triangle has the maximum possible inradius. Thus the minimum possible ribbonlength occurs when the knot diagram is an equilateral triangle. We found the ribbonlength in that case to be $3\sqrt{3}$ in \prop{ooo-oou}.
\end{proof}

\section{Projection stick index and ribbonlength}\label{section:torus knot}

It is natural to wonder about the relationship between the ribbonlength and the number of line segments in the knot diagram. The {\em projection stick number} is the least number of line segments in any projection of a polygonal embedding of a knot. Together with undergraduate students, Colin Adams has given some results about the  projection stick index of knots in~\cite{Adams-Shayler, Adams-PS}. For example, the projection stick index of the trefoil knot is five. 

It turns out that the minimal ribbonlength of a knot (with respect to knot diagram equivalence) does not necessarily occur when the knot diagrams has the projection stick number for that knot. For example, in their paper Kennedy {\it et al.} \cite {KMRT}, found that there was a smaller ribbonlength for the $(5,2)$ and $(7,2)$ torus knots, simply by adding two more sticks to the knot diagram and rearranging. Given their construction, the question remains whether or not the ribbonlength was minimized with respect to link equivalence and not just knot diagram equivalence.

The (standard) projection stick number $(5,2)$ torus knot diagram is found on the right in \fig{T52-3}. The knot diagram of the $(5,2)$ torus knot with smaller ribbonlength (from \cite{KMRT}) is found on the left in \fig{T52-1}. A sequence of Reidemeister moves from one diagram to the other is reasonably straightforward. In \fig{T52-1}, in moving from the left to the middle figure, two RII moves are used to move the edges adjacent to vertex $A$ below edge $EF$. Then one RI and several RIII moves occur between the middle and right diagrams as vertex $B$ is moved down and around vertex $G$.  From \fig{T52-1} right to \fig{T52-3} left, there are two RII and several RIII moves as vertex $B$ is moved down and around vertex $E$. A RI move occurs between \fig{T52-3} left and middle, then the edges are straightened (planar isotopy) to reach \fig{T52-3} right.

\begin{figure}[htbp]
\begin{center}
\begin{overpic}{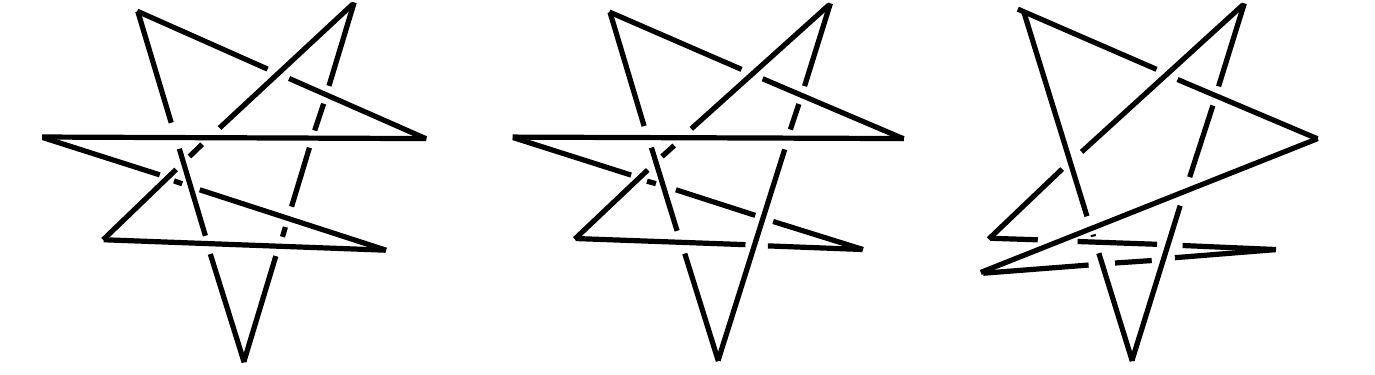}
\put(28,8){$A$}
\put(0,16){$B$}
\put(31.5,16){$C$}
\put(7,26){$D$}
\put(18,0){$E$}
\put(23,27){$F$}
\put(4.5,9){$G$}
\put(63,8){$A$}
\put(34.5,16){$B$}
\put(66,16){$C$}
\put(42,26){$D$}
\put(53,0){$E$}
\put(58,27){$F$}
\put(39,9){$G$}
\put(93,8){$A$}
\put(68,6){$B$}
\put(96,16){$C$}
\put(71,26){$D$}
\put(83,0){$E$}
\put(88,27){$F$}
\put(69,9){$G$}
\end{overpic}
\end{center}
\caption{Reidemeister moves for $(5,2)$ torus knot.}
\label{fig:T52-1}
\end{figure}

Kaufman in \cite{Kauf90} shows that regular isotopy (the equivalence relation on diagrams generated by the Reidemeister moves of types II and III) corresponds to ambient isotopy of embedded bands, and is in fact more restricted. Thus we expect the ribbon linking numbers the two folded ribbon $(5,2)$ torus knots in question to be different, since we had to use Reidemeister I moves to get from one to the other. Indeed, adding a loop to a diagram is equivalent to adding a full twist to a band. From this observation, it is immediately clear that the two folded ribbon $(5,2)$ torus knots in question are not ribbon link equivalent. We give more details in the following proof.

\begin{figure}[htbp]
\begin{center}
\begin{overpic}{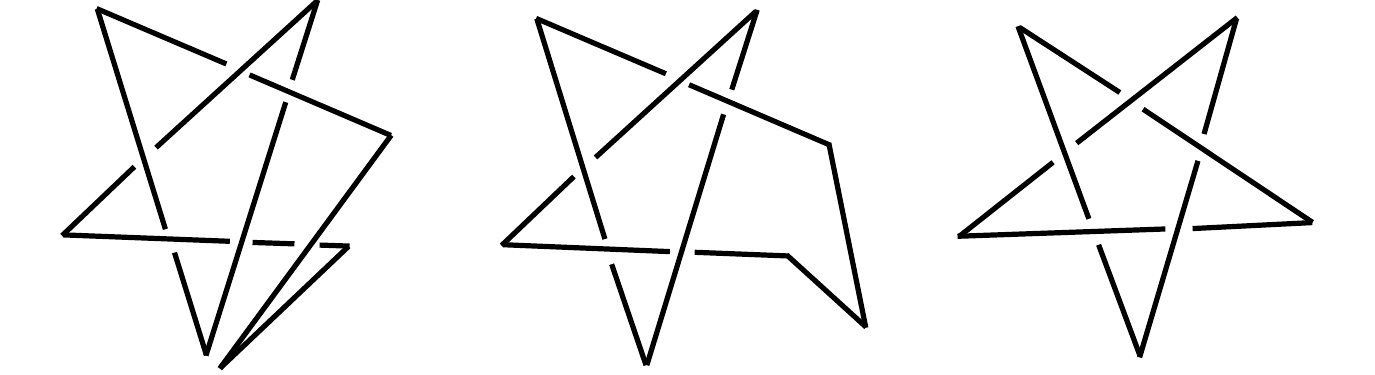}
\put(26,9){$A$}
\put(17,0){$B$}
\put(29,16){$C$}
\put(5,27){$D$}
\put(12,1){$E$}
\put(22,28){$F$}
\put(2,10){$G$}
%
\put(57,9){$A$}
\put(62,1){$B$}
\put(61,16){$C$}
\put(37,26){$D$}
\put(48,0){$E$}
\put(54,27){$F$}
\put(34,9){$G$}
\put(92,8){$A=B=C$}
\put(72,26){$D$}
\put(84,0){$E$}
\put(91,27){$F$}
\put(67,9){$G$}

\end{overpic}
\end{center}
\caption{Reidemeister moves for $(5,2)$ torus knot.}
\label{fig:T52-3}
\end{figure}

\begin{lemma} The folded $(5,2)$ torus ribbon knot of \fig{T52-1} left is not ribbon link equivalent to the folded $(5,2)$ torus ribbon knot of \fig{T52-3} right.
\end{lemma}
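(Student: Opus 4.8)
The plan is to show that the two folded ribbon $(5,2)$ torus knots have different ribbon linking numbers, which by \defn{re} immediately shows they are not ribbon link equivalent (they do have equivalent knot diagrams, since the sequence of Reidemeister moves in \fig{T52-1} and \fig{T52-3} realizes the diagram equivalence). The whole argument hinges on the principle, recalled from Kauffman \cite{Kauf90} just before the statement, that regular isotopy (Reidemeister II and III moves only) corresponds to ambient isotopy of embedded bands, whereas a Reidemeister I move changes the framing by a full twist.

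First I would make precise how the ribbon linking number behaves under Reidemeister moves. Since $\lk(K_{w,F})$ is, by \defn{ribbonlinkingnumber}, the linking number of the knot diagram with one boundary component of the ribbon, and this equals the self-framing (writhe-type) quantity of the band, RII and RIII moves leave it unchanged while each RI move changes it by $\pm 1$ according to the sign of the removed or added loop. So the key step is to tally the RI moves used in passing from \fig{T52-1} left to \fig{T52-3} right and check that the net count is nonzero. From the description preceding the statement, exactly two RI moves are used: one between \fig{T52-1} right and \fig{T52-3} left, and one between \fig{T52-3} left and middle. I would verify that these two loops carry the same sign (so they add rather than cancel), giving a net change of $\pm 2$ in the ribbon linking number.

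The cleaner route, which avoids sign-bookkeeping through a long Reidemeister sequence, is simply to compute $\lk(K_{w,F})$ directly for each of the two diagrams from its definition — count the signed crossings between the core curve $K$ and one chosen boundary edge of the ribbon, using the crossing convention of \fig{crossingvalue}, and take half the sum. I would orient both ribbons consistently, pick one boundary component on each, and read off the crossings from the pictures in \fig{T52-1} left and \fig{T52-3} right. If the two resulting linking numbers differ, we are done. I would present both the direct computation and the Reidemeister-I count as a consistency check, since they must agree.

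The main obstacle is bookkeeping rather than conceptual: correctly and unambiguously extracting the signed crossing data from the figures, and being careful that the folding information $F$ (the over/under choice at each fold) is held fixed and is compatible with the stated crossing information, so that the two folded ribbons really are built on genuinely equivalent oriented knot diagrams and differ only in framing. The subtle point to get right is that a difference in ribbon linking number is exactly what obstructs link equivalence while permitting both topological and knot-diagram equivalence; I would state explicitly which boundary component is used and confirm the count is independent of that choice (the two boundary components give linking numbers differing in a controlled way, and the invariant is well-defined up to the conventions fixed in \defn{ribbonlinkingnumber}).
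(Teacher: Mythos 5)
Your fallback route (computing the ribbon linking number of each folded ribbon directly from \defn{ribbonlinkingnumber}) is sound and would suffice, but your primary route contains a genuine error: the rule that ``each RI move changes the ribbon linking number by $\pm 1$'' is the blackboard-framing rule, and it fails for folded ribbons because the folds themselves contribute to the ribbon linking number. The paper's proof shows exactly this: at each of the two RI moves in the sequence, the disappearance of the crossing's contribution is precisely offset by the change in the fold's contribution as the fold switches handedness --- at $A$, the crossing between $AB$ and $GF$ contributes $-2$ before the move, while the fold at $A$ goes from a left underfold contributing $+1$ to a right underfold contributing $-1$; similarly for the overfold at $B$. So the ribbon linking number is \emph{unchanged} through the entire Reidemeister sequence from \fig{T52-1} left to \fig{T52-3} middle. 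Had you carried out your tally correctly in the folded-ribbon model, you would have found zero net change at the RI steps and your argument would stall; indeed your proposed ``consistency check'' against the direct computation would have exposed precisely this discrepancy. Your opening identification of the ribbon linking number with ``the self-framing (writhe-type) quantity of the band'' is false for the same reason: folds contribute $\pm 1$ each, so the invariant is not a signed-crossing count alone.

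Where the paper actually locates the difference is in the final step, which is not a Reidemeister move at all: straightening \fig{T52-3} middle into the standard diagram of \fig{T52-3} right leaves a full twist of the ribbon concentrated at the vertices $A$ and $B$ (\fig{T52-4}), and this twist cannot be removed without changing the folding information there. The standard folded ribbon of \fig{T52-3} right lacks this twist, so the two ribbon linking numbers differ and the ribbons are not link equivalent. One smaller point: your concern about which boundary component to use is vacuous here --- both diagrams have an odd number of sticks, so by the remark at the end of Section~\ref{section:model} both ribbons are M\"obius bands with a single boundary component.
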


\begin{proof}
We first show that the ribbon linking number is not changed under the Reidemeister moves between \fig{T52-1} left and \fig{T52-3} middle. Then we show that the ribbon knots of \fig{T52-3} middle and right differ by a full twist of the band, and so are not ribbon link equivalent.

It is straightforward to show that RII and RIII moves do not change the ribbon linking number, and we omit these arguments. Instead, we focus our attention on the two RI moves. Without loss of generality, orient the ribbon from $A$ to $B$ to $C$ etc. Observe that between \fig{T52-1} middle and right, edge $AB$ moves below edge $GA$. Since crossing information is continuous, this forces the folding information at a $A$ to be an underfold. Before the RI move, the crossing between $AB$ and $GF$ contributes $-2$, and $A$, as a left underfold, contributes $+1$ to the ribbon linking number. After the RI-move, the crossing is gone and $A$, as a right underfold, contributes $-1$ to the ribbon linking number.  Thus there is no net change in the ribbon linking number.

Similarly, between \fig{T52-3} left and middle, edge $BC$ moves above edge $AB$, forcing the folding information at $B$ to be an overfold. Before the RI-move, the crossing between $GA$ and $BC$ contributes $-2$, and $B$, as a right overfold, contributes $+1$ to the ribbon linking number. After the RI move, the crossing is gone and $B$, as a left overfold, contributes $-1$ to the ribbon linking number. Again, there is no net change in the ribbon linking number.

\fig{T52-4} gives a close-up view of the ribbon near vertices $A$, $B$, and $C$. There is a full twist of the ribbon at vertices $A$ and $B$. This twist can not be removed, unless the folding information is changed at either $A$ or $B$. Thus the folded ribbons knots of \fig{T52-3} middle and right differ by a full twist of the ribbon. Their ribbon linking numbers are different, and they are not ribbon link equivalent.
\end{proof}

\begin{figure}[htbp]
\begin{center}
\begin{overpic}{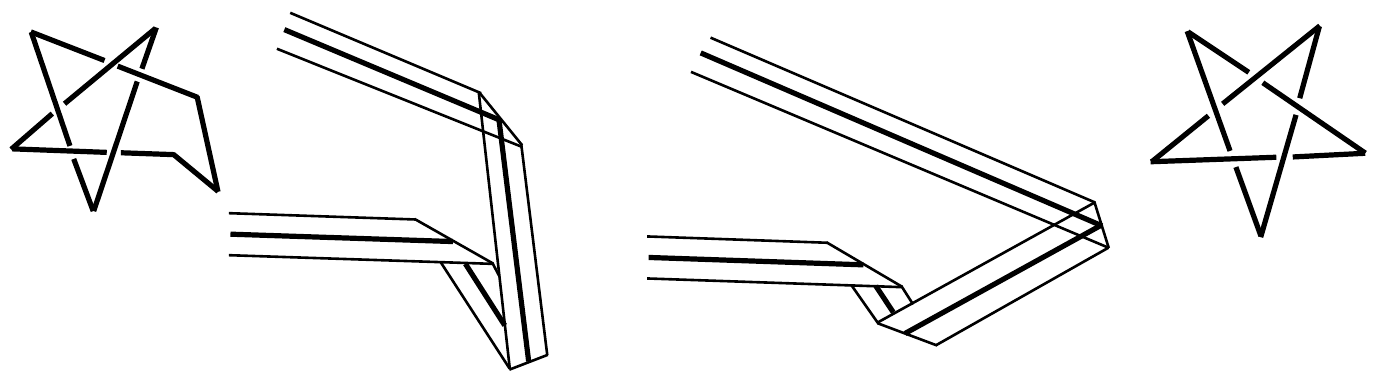}
\put(32.5,11){$A$}
\put(40,0){$B$}
\put(37,19){$C$}
\put(63.5,8.5){$A$}
\put(65,0){$B$}
\put(81,11){$C$}
\end{overpic}
\end{center}
\caption{The ribbon has a full twist concentrated at vertices $A$ and $B$.}
\label{fig:T52-4}
\end{figure}

We believe that a similar argument will hold for the standard and shorter versions of the  $(7,2)$ torus knot also found in \cite {KMRT}. 

In summary, these examples give a candidate for the minimal ribbonlength (with respect to knot diagram equivalence) that is given by a knot diagram that is {\bf not} link equivalent to the projection stick number diagram. It thus remains open whether or not the projection stick number diagram provides the minimal ribbonlength among all its link equivalent diagrams.  The precise relationship between projection stick number and ribbonlength remains open. For example, what are the ribbon link numbers generated by a knot diagram with projection stick number? How is this related to ribbonlength?


\section{Acknowledgments}
The authors wish to thank John Sullivan and Nancy Wrinkle for discussions on the smooth ribbon case, ribbon width and medial axis, and to Jason Cantarella for discussions on ribbons in general. 

This project couldn't have been completed without the work of Elizabeth Denne's former undergraduate research students from Smith College:  Shivani Ayral, Eleanor Conley, Shorena Kalandarishvili, and Emily Meehan. Thank you.

Finally, we would like to thank the referee for making helpful suggestions on streamlining and clarifying the proofs in Sections 5.2 and 6.


\bibliographystyle{amsplain}

\end{document}